\newcommand{\de}{\partial}
\newcommand{\ddbar}{\sqrt{-1} \partial \overline{\partial}}
\newcommand{\ov}[1]{\overline{#1}}
\newcommand{\tr}[2]{\textrm{tr}_{#1}{#2}}
\newcommand{\ti}[1]{\tilde{#1}}
\newcommand{\vp}{\varphi}
\newcommand{\ve}{\varepsilon}
\renewcommand{\leq}{\leqslant}
\renewcommand{\geq}{\geqslant}
\renewcommand{\le}{\leqslant}
\renewcommand{\ge}{\geqslant}
\newcommand{\be}{\begin{equation}}
\newcommand{\ee}{\end{equation}}
\begin{document}
\newtheorem{claim}{Claim}
\newtheorem{theorem}{Theorem}[section]
\newtheorem{lemma}[theorem]{Lemma}
\newtheorem{corollary}[theorem]{Corollary}
\newtheorem{proposition}[theorem]{Proposition}
\newtheorem{question}{question}[section]
\newtheorem{conjecture}[theorem]{Conjecture}
\numberwithin{equation}{section}
\theoremstyle{definition}
\newtheorem{remark}[theorem]{Remark}

\title[The ABP estimate and the Calabi-Yau equation]{The Aleksandrov-Bakelman-Pucci estimate and the Calabi-Yau equation}
\author[V. Tosatti]{Valentino Tosatti}
\author[B. Weinkove]{Ben Weinkove}
\address{Department of Mathematics, Northwestern University, 2033 Sheridan Road, Evanston, IL 60208}
\thanks{Partially supported by a Sloan Research Fellowship and NSF grant DMS-1308988 (V.T.), and by NSF grant DMS-1406164 (B.W.). Part of this work was done
while the authors were visiting the Center for Mathematical Sciences and Applications at Harvard University, which they thank for the hospitality.}

\begin{abstract}
We give two applications of the Aleksandrov-Bakelman-Pucci estimate to the Calabi-Yau equation on symplectic four-manifolds. The first is solvability of the equation on the Kodaira-Thurston manifold for certain almost-K\"ahler structures assuming $S^1$-invariance, extending a result of Buzano-Fino-Vezzoni.  The second is to reduce the general case of Donaldson's conjecture to a bound on the measure of a superlevel set of a scalar function.
\end{abstract}

\maketitle

\section{Introduction}

Yau's Theorem \cite{Y} states that one can prescribe the volume form of a K\"ahler metric on a compact K\"ahler manifold $M^n$ within a given cohomology class.  The proof reduces, via a continuity method, to obtaining uniform $C^{\infty}$ \emph{a priori} estimates on a potential function $u$ solving the complex Monge-Amp\`ere equation
\begin{equation} \label{yau}
(\omega + \ddbar u)^n = e^F \omega^n, \quad \omega + \ddbar u >0, \quad \sup_M u =0,
\end{equation}
for a given smooth function $F$.  A key step in Yau's paper \cite{Y} was the acclaimed $L^{\infty}$ estimate of $u$, which he obtained using a Moser iteration argument.

There are now alternative proofs of the $L^{\infty}$ estimate, which have been used to extend Yau's Theorem to different settings \cite{K, B, TW2, TW3, B2, DK, TW5}.  In particular, Cheng and Yau (see \cite[p. 75]{Be}) used the Aleksandrov-Bakelman-Pucci (ABP) estimate to prove an $L^2$ stability result for the complex Monge-Amp\`ere equation, and later B{\l}ocki \cite{B} used this idea to give a new proof of the $L^{\infty}$ estimate.  Recall that the ABP estimate states, roughly speaking, that the infimum of a function on a bounded domain can be controlled in terms of its infimum on the boundary and  the integral of the determinant of its Hessian over the set where the function is convex (see e.g. \cite[Lemma 9.2]{GT}).  B{\l}ocki's argument uses this to reduce the $L^{\infty}$ estimate of $u$ to an $L^1$ bound of $u$.
Recently, Sz\'ekelyhidi \cite{Sz} strengthened this method to deal with equations involving first order derivative terms, and this now has been used to establish $L^{\infty}$ estimates for a large class of Monge-Amp\`ere type equations \cite{STW, CJY, CTW}.

The purpose of this note is to apply the ABP estimate to Donaldson's problem of the Calabi-Yau equation on symplectic 4-manifolds.
Donaldson \cite{D} conjectured the following:

\begin{conjecture}\label{don}
Consider a compact symplectic manifold $(M, \omega)$ of real dimension 4, equipped with an almost complex structure $J$ which is tamed by $\omega$, and with  $\tilde{\omega}$ another symplectic form compatible with $J$, cohomologous to $\omega$ and solving the Calabi-Yau equation
\begin{equation} \label{CYeqn}
\tilde{\omega}^2 = e^F \omega^2,
\end{equation}
for some smooth $F$.  Then there are $C^{\infty}$ \emph{a priori} estimates of $\tilde{\omega}$ depending only on $M, J, \omega$ and $F$.
\end{conjecture}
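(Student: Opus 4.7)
The plan is to run a continuity method along $F_t = tF$, connecting the trivial solution at $t=0$ (a fixed almost-K\"ahler reference) to the desired $F$ at $t=1$, and to prove uniform $C^k$ bounds on $\tilde\omega$ along the path depending only on $M, J, \omega, F$. Openness at each $t$ is standard via the implicit function theorem; the conjecture is equivalent to closedness, which in turn reduces to establishing the a priori estimates. I would obtain them in the usual order $C^0 \to C^2 \to C^{2,\alpha} \to C^\infty$.

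For the $C^0$ estimate, following Donaldson's framework, one parametrizes $\tilde\omega$ by a scalar potential $u$ normalized by $\sup_M u = 0$ after fixing a suitable gauge for the primitive of $\tilde\omega - \omega$, and \eqref{CYeqn} becomes a Monge-Amp\`ere type PDE for $u$ carrying additional first-order and torsion terms that reflect both the non-integrability of $J$ and the failure of $\omega$ to be compatible. I would then apply the Cheng-Yau/B\l ocki ABP strategy together with Sz\'ekelyhidi's extension accommodating first-order terms (as cited in the introduction) to reduce the $L^\infty$ bound on $u$ to an integral bound, and further --- as the present paper does --- to a bound on the measure of a superlevel set of an auxiliary scalar function built from the equation.

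Given $\|u\|_{L^\infty}$, the second-order estimate $\tr{g}{\tilde g} \le C$ follows by applying the maximum principle to $\log \tr{g}{\tilde g} - Au$ with $A \gg 1$, as in Tosatti-Weinkove-Yau; the extra torsion and commutator terms from $J$ being non-integrable are either absorbed by the $-Au$ barrier or controlled in terms of the fixed data $J,\omega,F$. At that point \eqref{CYeqn} is uniformly elliptic with concave principal part, so an Evans-Krylov argument adapted to the almost-complex setting yields $C^{2,\alpha}$, and Schauder bootstrap on the linearized equation promotes this to $C^\infty$.

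The main obstacle is unambiguously the $C^0$ estimate in the tamed (non-compatible) setting: pluripotential theory and the classical Moser iteration of Yau rely on $\omega$ defining a Riemannian metric via $J$, which fails when $\omega$ is only tamed, and the standard integration-by-parts identities pick up torsion corrections that cannot yet be controlled in full generality. The contribution of the present paper, as signalled in the abstract, is precisely to pin down the missing ingredient --- the superlevel-set measure bound --- so that the full conjecture reduces to a single scalar estimate. My strategy would then be to attack that measure bound directly, for instance via a De Giorgi iteration on the auxiliary function, a capacity-type argument, or a geometric comparison exploiting the symplectic volume of $M$, and to feed the result back through the ABP machinery and the higher-order chain above to close \eqref{CYeqn}.
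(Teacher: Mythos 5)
You are attempting to prove Conjecture \ref{don}, which is genuinely open: the paper does not prove it, and neither does your proposal. What the paper establishes (Theorem \ref{theorem2}) is exactly the reduction you sketch --- from $C^\infty$ estimates, through the $L^\infty$ bound of the almost-K\"ahler potential $\vp$ of \eqref{akp}, down to a bound on the measure of the superlevel set $\{\vp\geq-\lambda\}$ --- and your outline tracks this correctly. But your chain of reasoning stops exactly where the paper's does: for the missing measure bound you only list candidate strategies (a De Giorgi iteration, a capacity argument, a volume comparison) without executing any of them, so nothing is proved beyond what is already in Theorem \ref{theorem2}.

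Two further concrete issues. First, the paper records that the proof of Theorem \ref{theorem2} never uses the cohomological hypothesis $[\ti{\omega}]=[\omega]$, and hence any proof of the missing measure bound \emph{must} use that hypothesis (or a substitute; see \cite[Question 2.1]{TW4}) in an essential way. None of your three suggested approaches visibly engages with it: a De Giorgi or capacity argument run on the Poisson relation $\ti{\Delta}\vp=2-\tr{\ti{\omega}}{\omega}$ sees nothing cohomological, and ``exploiting the symplectic volume'' only yields $\int_M\ti{\omega}^2=\int_M\omega^2$, which is already encoded in the normalization of $F$ and is insufficient. Second, your framing of \eqref{CYeqn} as ``a Monge--Amp\`ere type PDE for $u$ carrying additional first-order and torsion terms'' is not accurate in general: after fixing the gauge in \eqref{akp}, the equation still involves the $1$-form $a$ as an unknown alongside $\vp$, so it is not a local scalar equation in $\vp$, and Sz\'ekelyhidi's first-order ABP refinement does not apply off the shelf. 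The paper's Section \ref{sectionCYs} shows that the key estimate \eqref{keyestimate} nevertheless holds for $\vp$, by wedging against $\ti{\omega}$ and using the gauge condition $\ti{\omega}\wedge da=0$ to deduce that $\omega^{(1,1)}$ and $\ti{\omega}$ are uniformly equivalent on the contact set $P$; this mechanism is specific to the setup, which is why the paper calls it ``surprising'' that ABP applies here at all.
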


If true, this would establish an almost-K\"ahler version of Yau's Theorem for $4$-manifolds, and would also have consequences for Donaldson's ``tamed to compatible'' conjecture \cite{D} (partially confirmed by Taubes \cite{Ta}, see also the surveys \cite{DLZ,TW4} and the references therein).

While Conjecture \ref{don} still remains open in general, we prove two consequences of the ABP estimate.  The first is for the Calabi-Yau equation on the Kodaira-Thurston manifold $M = (\textrm{Nil}^3/\Gamma)\times S^1$ where $\textrm{Nil}^3$ is the Heisenberg group of invertible matrices of the form
$$\begin{pmatrix} 1 & x & z \\ 0 & 1 & y \\ 0 & 0 & 1 \end{pmatrix}, \quad \textrm{with } x,y,z \in \mathbb{R},$$
and $\Gamma$ is the subgroup consisting of those elements with integer entries, acting by left multiplication.  $M$ admits symplectic forms, but no K\"ahler metric.  The Calabi-Yau equation on $M$ was first considered in \cite{TW} where it was shown that equation (\ref{CYeqn}) is solvable for certain $(J, \omega)$ assuming a $T^2$ symmetry of the initial data. More cases with $T^2$ symmetry were solved in \cite{FLSV,BFV2, V}, and this was recently improved by Buzano-Fino-Vezzoni \cite{BFV} to the case of $S^1$ symmetry, which we now describe.

$M$ has an $S^1$ family of non-integrable almost complex structures $J_{\theta}$ for $\theta \in [0,2\pi)$ which are compatible with a symplectic form $\omega_{\theta}$ on $M$.  In addition, 
there is an $S^1$ action on $M$ given by translation in the $z$ coordinate which preserves $(J_{\theta}, \omega_{\theta})$ for each $\theta$.  For details, see Section \ref{sectionKT} below.

The result of Buzano-Fino-Vezzoni \cite[Theorem 2]{BFV} is that one can solve the Calabi-Yau equation for any $S^1$-invariant $F$  with the additional assumption
\begin{equation} \label{BFVass}
\cos \theta =0 \quad \textrm{or} \quad \tan \theta \in \mathbb{Q}.
\end{equation}
  We give a new approach using the ABP estimate which removes the assumption (\ref{BFVass}) on $\theta$.  More precisely we prove:

\pagebreak[3]

\begin{theorem} \label{theorem1} Given any $0\leq\theta<2\pi$, and any smooth $S^1$-invariant function $F$ on the Kodaira-Thurston manifold $M$ with $\int_M(e^F-1)\omega_\theta^2=0$, there is a unique symplectic form $\ti{\omega}$ on $M$ which is compatible with $J_\theta$, with $[\ti{\omega}]=[\omega_\theta]$, and solving the Calabi-Yau equation
\begin{equation}\label{BFVCY}
\ti{\omega}^2=e^F\omega_\theta^2.
\end{equation}
\end{theorem}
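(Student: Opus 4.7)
The plan is to follow the standard continuity method for the Calabi-Yau equation and reduce the proof of Theorem~\ref{theorem1} to an a priori $C^0$ bound on a scalar potential $u$ associated to the unknown form $\tilde\omega$; the higher-order estimates then follow from the four-dimensional almost-K\"ahler theory of Tosatti-Weinkove-Yau as used in \cite{BFV}. Uniqueness of $\tilde\omega$ (which is part of the statement and follows from a standard maximum principle argument on the linearization of the equation) forces $\tilde\omega$ to inherit the $S^1$-symmetry of $F$, so the potential depends only on the three remaining coordinates $(x,y,t)$.

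Using the standard left-invariant coframe on $\textrm{Nil}^3/\Gamma$ together with the $S^1$ coordinate $t$, I would write \eqref{BFVCY} explicitly in terms of $u=u(x,y,t)$. The $S^1$-invariance collapses the Calabi-Yau equation to a fully nonlinear real PDE on a $3$-torus, with a Monge-Amp\`ere-type principal part coming from the volume form on the left-hand side of \eqref{BFVCY}, plus first-order gradient terms arising from the non-integrability of $J_\theta$ and from the mixing of the Heisenberg and $S^1$ directions whenever $\sin\theta\cos\theta\neq 0$. For $\cos\theta=0$ or $\tan\theta\in\mathbb Q$, a rational change of frame kills these first-order terms and one recovers the equation treated in \cite{BFV}; for general $\theta$ such a change of frame fails because of irrationality, and one has to estimate $u$ in the presence of genuine gradient terms.

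To handle those first-order terms I would apply Sz\'ekelyhidi's strengthening of the B{\l}ocki-ABP method \cite{Sz}, designed precisely for Monge-Amp\`ere-type equations with gradient terms. Lifting $u$ to a fundamental domain in $\mathbb{R}^3$ and applying the real ABP estimate \cite[Lemma 9.2]{GT} to the convex envelope of $u-\inf u$ reduces the $L^\infty$ estimate to an $L^1$ estimate for $u$, which in turn follows from the standard Green's function argument on the compact manifold $M$, as in \cite{B,Sz}. The main obstacle is the verification that the explicit coefficients of the reduced 3D equation fit into the structural framework required by Sz\'ekelyhidi's argument, in particular that the gradient terms satisfy the quadratic control by the Monge-Amp\`ere determinant used in \cite{Sz}. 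This is a direct calculation in the Kodaira-Thurston coframe; once it is carried out, the ABP estimate produces the $C^0$ bound uniformly in $\theta$, closing the continuity method and finishing the proof.
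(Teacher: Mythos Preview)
Your overall strategy is the same as the paper's: reduce to a scalar PDE on $T^3$, obtain an $L^1$ bound on $u$ via a Green's function argument, upgrade to $L^\infty$ via the Sz\'ekelyhidi ABP method, and then invoke the higher-order estimates of \cite{BFV} (which, as the paper notes, do not use the hypothesis on $\theta$). So the plan is correct.

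However, your account of the reduced equation and of why \cite{BFV} needed $\cos\theta=0$ or $\tan\theta\in\mathbb{Q}$ is off. The reduced PDE, namely \cite[(61)]{BFV},
\[
(u_{XX}+1)(u_{YY}+u_{tt}+u_t+1)-u_{XY}^2-u_{Xt}^2=e^F,
\]
has exactly the \emph{same} form for every $\theta$; the only $\theta$-dependence sits in the rotated vector fields $X=\cos\theta\,\partial_x-\sin\theta\,\partial_y$, $Y=\sin\theta\,\partial_x+\cos\theta\,\partial_y$, and the lone first-order term $u_t$ is present for all $\theta$. There is no additional gradient term appearing for irrational $\tan\theta$, and no ``rational change of frame'' that kills $u_t$ when $\tan\theta\in\mathbb{Q}$. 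The restriction in \cite{BFV} entered in their particular $C^0$ argument, not in the structure of the equation. Consequently, the ``main obstacle'' you describe---checking that extra $\theta$-dependent gradient terms satisfy quadratic control---does not arise. What you actually have to do is the key estimate $\det D^2v\le C$ on the contact set $P$, and the paper carries this out by a short direct computation: rewrite the PDE in terms of $v=u+\frac{\varepsilon}{r^2}|x|^2$, use $D^2v\ge 0$ on $P$ to get $v_{XY}^2\le v_{XX}v_{YY}$ and $v_{Xt}^2\le v_{XX}v_{tt}$, and deduce $\mathrm{tr}(D^2v)\le 2e^F$, hence $\det D^2v\le C$. This is the concrete content behind your phrase ``direct calculation in the Kodaira--Thurston coframe''; no abstract structural framework is invoked.
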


Of course, this gives in particular a proof of Conjecture \ref{don} on $(M,\omega_\theta,J_\theta)$ when $F$ is $S^1$ invariant.

Following \cite{BFV}, the equation (\ref{BFVCY}) can be written as an elliptic PDE for a scalar function $u$ on the three-torus $T^3$.  The ABP estimate shows that the  $L^{\infty}$ estimate reduces to a simple integral bound for $u$ which can be easily established.  Given this we can  apply the rest of the arguments of \cite{BFV}, which do not need the assumption \eqref{BFVass} on $\theta$, to obtain all the other estimates.

Our second application of the ABP estimate shows that in the general case of Conjecture \ref{don} on a $4$-manifold $(M, \omega, J)$, we can reduce all the estimates to an integral bound of a certain potential function.  The idea of using the ABP estimate in this context was first pointed out to us by Sz\'ekelyhidi \cite{Sz2}.  
We now describe our results more precisely.   We apply the ABP estimate to the ``almost-K\"ahler potential'' $\varphi$ of \cite{W, TWY},  defined by
\begin{equation} \label{akp}
\tilde{\omega} =  \omega + \frac{1}{2}dJd \varphi + da, \quad \ti{\omega} \wedge da=0, \quad \sup_M \varphi=0,
\end{equation}
using the sign convention in \cite{CTW} and where $a$ is a $1$-form (see Section \ref{sectionCYs} for more details).
It was shown in \cite{W,TWY} that $C^{\infty}$ estimates for $\tilde{\omega}$ in (\ref{CYeqn}) follow from an $L^{\infty}$ bound on $\varphi$, and this was reduced in \cite{TWY} to a  bound on $\int_M e^{-\alpha \varphi} \omega^2$ for some $\alpha>0$.  Our result here is that we can further reduce this, via the ABP method, to a much weaker kind of  bound. An $L^p$ bound of $\varphi$, for any $0<p<\infty$, would suffice, but in fact much less than this is needed:

\begin{theorem} \label{theorem2} Let $(M,\omega,J)$ be as in Conjecture \ref{don} and let $\tilde{\omega}$ solve the Calabi-Yau equation (\ref{CYeqn}).
Let $\mathcal{F}:\mathbb{R} \to \mathbb{R}_{\geq 0}$ be any increasing function with $\lim_{x\to+\infty}\mathcal{F}(x)=+\infty$.  Then the function  $\vp$, defined by (\ref{akp}) satisfies the estimate
\begin{equation}\label{bd0}
\mathcal{F}(\|\vp\|_{L^\infty(M)}-1)\leq C\int_M \mathcal{F}(-\vp)\omega^2,
\end{equation}
for a uniform constant $C$ depending only on the background data $M, \omega, J, F$.  Hence, to prove Conjecture \ref{don} it is sufficient to bound $\int_M \mathcal{F}(-\vp)\omega^2$ for some fixed $\mathcal{F}$.

Furthermore, if we fix $q>3/2$, then for a uniform constant $C_q$, the function $\varphi$ satisfies the estimate
\begin{equation} \label{mi}
\| \varphi \|_{L^{\infty}(M)} \le C_q \left( \int_{ \{ \varphi \ge - \lambda\}} \omega^2 \right)^{-q} + \lambda,
\end{equation}
for any $\lambda >0$.
\end{theorem}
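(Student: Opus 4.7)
The plan is to apply the ABP estimate to a quadratically perturbed almost-K\"ahler potential in a Darboux coordinate ball around a minimum point of $\vp$, following the framework of B{\l}ocki \cite{B} and Sz\'ekelyhidi \cite{Sz}. Set $L=\|\vp\|_{L^\infty(M)}$ and pick $x_0\in M$ with $\vp(x_0)=-L$. By the symplectic Darboux theorem, choose a chart $\Phi:B_R\subset\mathbb R^4\to M$ with $\Phi(0)=x_0$ and $R>0$ depending only on $(M,\omega,J)$, so that $\Phi^*\omega$ is uniformly $C^0$-close to $\oeuc$. For $\lambda\in(0,L)$, put
\[
v(x)=\vp(\Phi(x))+\frac{\lambda}{R^2}|x|^2,\quad x\in B_R.
\]
Then $v(0)=-L$ and $v\ge -L+\lambda$ on $\partial B_R$. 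The classical gradient-covering argument applied to the lower contact set $\Gamma^-\subset B_R$ of $v$ simultaneously yields $\int_{\Gamma^-}\det(D^2 v)\,dx\ge c_0\lambda^4/R^4$, the pointwise gradient bound $|Dv|\le\lambda/R$ on $\Gamma^-$, and (via the supporting-hyperplane inequality) the inclusion $\Gamma^-\subset\{\vp\le -L+2\lambda\}$.

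The main technical step is to bound $\det(D^2 v)$ pointwise on $\Gamma^-$. There $D^2 v\ge 0$ and $|D\vp|\le 3\lambda/R$. Combining the Calabi--Yau equation (\ref{CYeqn}), the positivity of $\ti\omega$ as a $J$-invariant $(1,1)$-form, and the identity $\det_{\mathbb R}A=(\det_{\mathbb C}A)^2$ for $J$-invariant positive Hermitian $A$ (applied to the matrix of $\ti\omega$), one obtains a pointwise bound $\det(D^2 v)\le C_1$ on $\Gamma^-$, where in the almost-K\"ahler setting $C_1$ depends on $(M,\omega,J,F)$ together with integral quantities controlling the correction 1-form $a$ and the first-order terms arising from the Nijenhuis tensor of $J$. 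Feeding this into the ABP bound and writing $\mu=L-2\lambda$, one gets $|\{\vp\le -\mu\}|\ge c(L-\mu)^4/(R^4 C_1)$; a careful tracking of how $C_1$ depends on $|E_\mu|^{-1}$ through the integral control of $a$ (with H\"older's inequality accounting for the gap between the Hessian determinant and the mixed terms) produces (\ref{mi}) for any exponent $q>3/2$, the threshold $3/2$ reflecting the four-dimensional Sobolev scaling used.

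The estimate (\ref{bd0}) is deduced from (\ref{mi}) by a distribution-function argument. Apply (\ref{mi}) with $\lambda=L-C_q(2/V)^q$, where $V=\int_M\omega^2$; this forces $|E_\lambda|\le V/2$. On the complement $M\setminus E_\lambda$ one has $-\vp\ge \lambda$, hence $\mathcal F(-\vp)\ge \mathcal F(\lambda)$, so $\int_M\mathcal F(-\vp)\omega^2\ge (V/2)\mathcal F(L-C_q(2/V)^q)$. Absorbing the constant shift $C_q(2/V)^q$ into the shift $-1$ on the left (after choosing $q$ large enough that $C_q(2/V)^q\le 1$, which is permitted since $q$ is free in (\ref{mi})) and handling the case $L\le 1$ trivially by monotonicity of $\mathcal F$ yields (\ref{bd0}).

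\textbf{Main obstacle.} The pointwise estimate $\det(D^2 v)\le C_1$ on $\Gamma^-$. In the K\"ahler case \cite{B} this follows unconditionally from the determinant identity together with the pure MA equation, and already gives a \emph{direct} $L^\infty$ bound with no integral hypothesis. The almost-K\"ahler corrections from $a$ and from the non-integrability of $J$ introduce first-order terms in the Hessian equation that cannot be controlled pointwise by background data alone, and it is precisely their integral control that forces the integral assumption on $\vp$ in (\ref{bd0}) and the exponent $q>3/2$ in (\ref{mi}).
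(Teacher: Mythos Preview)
Your proposal has a genuine gap: you have misidentified the main obstacle. The pointwise bound $\det(D^2 v)\le C$ on the contact set holds with $C$ depending \emph{only} on $(M,\omega,J,F)$, not on any integral control of $a$. The point you are missing is the gauge condition $\tilde\omega\wedge da=0$ built into (\ref{akp}). Wedging $\tilde\omega=\omega+\tfrac12 dJd\vp+da$ with $\tilde\omega$ kills $da$ outright, and on the contact set (where $D^2v\ge 0$ and $|Dv|\le\ve/2$) the first-order Nijenhuis terms in $(dJd\vp)^{(1,1)}$ are $O(\ve)$, so $(dJd\vp)^{(1,1)}=(D^2v)^J+O(\ve)\ge O(\ve)$. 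This yields $\tilde\omega^2\ge\tfrac12\tilde\omega\wedge\omega$, hence $\tr{\tilde\omega}{\omega}\le 4$, so $\tilde\omega$ and $\omega^{(1,1)}$ are uniformly equivalent at that point; feeding this back into $\tilde\Delta\vp\le 2$ bounds $(D^2v)^J$ and then $\det D^2v\le 8\det((D^2v)^J)\le C$. The ABP framework of Section~\ref{sectionABP} then gives (\ref{bd0}) directly, with no integral hypothesis on $\vp$ entering the determinant bound.

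Consequently your route to (\ref{mi}) collapses: since $C_1$ is uniform, there is no ``$|E_\mu|^{-1}$ dependence'' to track, and the ABP step by itself only produces the measure lower bound $|\{\vp\le\inf_M\vp+1\}|\ge\delta$. The estimate (\ref{mi}) requires a genuinely separate argument: the paper proves the stronger bound $\|\vp-\inf_M\vp\|_{L^p}\le C_p$ for all $p<2/3$ (Proposition~\ref{main3}) via a cutoff $\psi_s=\max(-\vp+\inf_M\vp+s,0)$, integration by parts against $\tilde\Delta\vp=2-\tr{\tilde\omega}{\omega}$ to get $\|\partial\psi_s\|_{L^1}\le Cs^{1/2}$, the Sobolev--Poincar\'e inequality (this is where the exponent $4/3$ and hence $q>3/2$ actually enter), and the measure lower bound (\ref{measure}). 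Your proposal reverses the logical order (deriving (\ref{bd0}) from (\ref{mi})) and never supplies the Sobolev--Poincar\'e step that produces the $3/2$ threshold; the claim that $C_q(2/V)^q\le 1$ for large $q$ is also unjustified since $C_q$ may grow with $q$.
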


The second estimate (\ref{mi})  shows that to prove Conjecture \ref{don} it is sufficient to get a uniform positive lower bound of the measure of the set $\{ \varphi \ge - \lambda \}$ for some (possibly very large) uniform $\lambda$.  To establish (\ref{mi}), we prove the strictly stronger result that $\varphi  - \inf_M \varphi$ is uniformly bounded in $L^p$ for any $0<p<2/3$. Note that on the other hand we do have a uniform positive lower bound for the measure of the set $\{\vp\leq\inf_M\vp+1\}$, see \eqref{measure}.

The bound \eqref{bd0} was established in \cite{TWY}  for the function $\mathcal{F}(x)=e^{\alpha x}$ using an estimate of the trace of $\tilde{\omega}$  \cite{W, TWY}. In fact it is possible to prove (\ref{bd0}) in general using the arguments of \cite{TWY, TW2}  (see in particular Remark 3.1 of \cite{TW2}). However, the proof we give here using the ABP estimate, which was pointed out to us by Sz\'ekelyhidi \cite{Sz2}, 
  is much simpler and we believe it is more natural. 
  
 It is perhaps surprising that the ABP method works here for $\varphi$, even though the equation (\ref{CYeqn}) is not a  local scalar PDE involving $\varphi$. We remark that the proof of Theorem \ref{theorem2} does not actually require the assumption that $[\ti{\omega}]=[\omega]$, and so this condition (or something similar, see \cite[Question 2.1]{TW4}) must be used in any proof of the missing integral bound of $\varphi$.

Finally we remark that Theorem \ref{theorem2} holds in higher dimensions with  almost exactly the same proof, after making a suitable change to the inequality $q>3/2$.

The outline of the paper is as follows.  In Section \ref{sectionABP} we outline the ABP method of Sz\'ekelyhidi in a rather general setting.  We then use it to prove Theorem \ref{theorem1} in Section \ref{sectionKT} and Theorem \ref{theorem2} in Section \ref{sectionCYs}.

\bigskip
\noindent
{\bf Acknowledgements.} \ The authors thank G. Sz\'ekelyhidi for many useful conversations.

\section{The ABP method of Sz\'ekelyhidi} \label{sectionABP}

Fix a ball $B=B_r(0) \subset \mathbb{R}^n$ centered at the origin of radius $r$ with $0<r\le 1$.
In \cite{Sz}, the following ABP estimate is proved for a smooth function $v: \ov{B} \rightarrow \mathbb{R}$.  (It is stated there for $r=1$ but this can be trivially extended to $0<r\le 1$.)

\begin{proposition} \label{gabor}
Assume that for $\ve>0$, the function $v: \ov{B}\rightarrow \mathbb{R}$ satisfies $v(0) + \ve \le \inf_{\partial B} v$.  If we define
$$P = \{ x \in B \ | \ |Dv(x)| < \ve/2,  \ \textrm{and } v(y) \ge v(x) + Dv(x) \cdot (y-x) \ \forall y \in B \},$$
then
$$ \ve^n \le  C_0 \int_P \det (D^2 v),$$
for a constant $C_0$ depending only on the dimension $n$.
\end{proposition}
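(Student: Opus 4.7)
The plan is to follow the classical ABP strategy: show that the gradient map $Dv$, restricted to the set $P$, covers a ball of radius $\ve/2$ in $\mathbb{R}^n$, and then compare volumes via the area formula.

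First I would introduce, for each affine slope $p \in \mathbb{R}^n$ with $|p| < \ve/2$, the auxiliary function $f_p(x) = v(x) - p\cdot x$ on $\ov{B}$. On the boundary, since $r \le 1$, we get $f_p(x) \ge \inf_{\partial B} v - |p|r \ge v(0) + \ve - \ve/2 = v(0) + \ve/2$, while $f_p(0) = v(0)$. Thus $f_p$ attains its infimum at some interior point $x_p \in B$. At such a critical point $Df_p(x_p) = 0$, giving $Dv(x_p) = p$ (so in particular $|Dv(x_p)| = |p| < \ve/2$), and $D^2 f_p(x_p) \ge 0$, so $D^2 v(x_p) \ge 0$.

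Next I would verify that $x_p$ actually lies in $P$. Since $x_p$ minimizes $f_p$ over all of $B$, we have $v(y) - p\cdot y \ge v(x_p) - p\cdot x_p$ for every $y \in B$, which rewritten using $Dv(x_p) = p$ becomes exactly the linear support inequality $v(y) \ge v(x_p) + Dv(x_p)\cdot(y - x_p)$. Combined with the gradient bound above, this shows $x_p \in P$, and therefore
\begin{equation*}
B_{\ve/2}(0) \subset Dv(P).
\end{equation*}

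Finally I would invoke the area (change of variables) formula for the smooth map $Dv: P \to \mathbb{R}^n$. Because every point of $P$ admits a supporting hyperplane from below, $D^2 v \ge 0$ on $P$, so the Jacobian of $Dv$ is the nonnegative quantity $\det D^2 v$. The area formula then yields
\begin{equation*}
|B_{\ve/2}(0)| \le |Dv(P)| \le \int_P \det(D^2 v),
\end{equation*}
and since $|B_{\ve/2}(0)| = c_n \ve^n$ for a dimensional constant $c_n > 0$, the desired inequality follows with $C_0 = 1/c_n$. The only mildly delicate point is ensuring $\det D^2 v \ge 0$ on $P$ so that the absolute value in the area formula can be dropped; this is immediate from the convex-support condition defining $P$, so no real obstacle is expected.
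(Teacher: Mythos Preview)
Your proof is correct and is the standard ABP argument: show that $Dv$ maps $P$ onto the ball $B_{\ve/2}(0)$ by minimizing tilted functions $v(x) - p\cdot x$, then apply the area formula using $D^2v \ge 0$ on $P$. Note that the paper does not supply its own proof of this proposition; it is quoted directly from Sz\'ekelyhidi's paper \cite{Sz} (with the remark that the extension from $r=1$ to $0<r\le 1$ is trivial), so there is no in-paper argument to compare against. Your write-up is essentially the proof one finds in that reference, and it already handles general $r\le 1$ directly.
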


The difference with the classical ABP estimate \cite[Lemma 9.2]{GT} is that $P$ is a subset of those points where $v$ has small derivative, and this is crucial for our application.
We now describe Sz\'ekelyhidi's method \cite{Sz} for applying this estimate to prove $L^{\infty}$ bounds (cf. \cite{B}).  Let $M$ be a compact manifold of real dimension $n$ with a fixed volume form $d\mu$.
Let $u: M \rightarrow \mathbb{R}$ be a smooth function with $\sup_M u=0$.    The function $u$ will satisfy a PDE but to keep the discussion general we will not specify the equation.
 We wish to obtain an upper bound for $\| u \|_{L^{\infty}(M)}$ in terms of an integral bound for $u$.  We will reduce this to a key pointwise inequality on $P$, for $B$ and $v$ which we will now specify.

  Suppose $u$ achieves its infimum at $x_0 \in M$ (assume without loss of generality that $\inf_M u < -1$) and take a coordinate chart centered at $x_0$ which we identify with a ball $B=B_r(0)$ of a fixed radius $0<r\le 1$.  Consider the function $v$ on $B$ defined by
$$v = u + \frac{\ve}{r^2} \sum_{i=1}^n x_i^2,$$
for a small uniform fixed $\ve>0$.  Applying Proposition \ref{gabor} we obtain
$$\ve^n \le C_0 \int_P \det (D^2v).$$
The {\bf key estimate} we need to prove, which will use the PDE satisfied by $u$ and the definition of $v$ and $P$, is the following:
\begin{equation} \label{keyestimate}
\textrm{at every $x\in P$ we have $\det (D^2v(x)) \le C$, for uniform $C$.}
 \end{equation}
Assume now that (\ref{keyestimate}) holds.  Then it follows that
\begin{equation}\label{Plb}
\ve^n \le C |P|,
\end{equation}
up to increasing the uniform constant $C$, where $| \cdot |$ denotes the measure with respect to the volume form $d\mu$.

Now the integral bound for $u$ comes in.  Let $\mathcal{F}:\mathbb{R} \to \mathbb{R}_{\geq 0}$ be an increasing function with $\lim_{x\to+\infty}\mathcal{F}(x)=+\infty$.  By definition, on the set $P$ we have $v \le v(0) + \ve/2$ and hence $u \le \inf_M u + \ve/2$.   For later use, we note that this with (\ref{Plb}) implies \begin{equation} \label{measurelowerbound}
| \{ u \le \inf_M u +1 \} | \ge \frac{\ve^n}{C}.
\end{equation}
Since $\mathcal{F}$ is increasing, on $P$ we have $\mathcal{F}(-u) \ge \mathcal{F}(-\inf_M u - \ve/2)$ and so
$$\ve^n \le C |P| \le C \frac{\int_M \mathcal{F}(-u) d\mu}{\mathcal{F}( - \inf_M u - \ve/2)}.$$
  Hence we obtain
$$\mathcal{F}( \| u \|_{L^{\infty}} - 1) \le \frac{C}{\ve^n} \int_M \mathcal{F}(-u)d\mu.$$
Since $\ve>0$ is uniform, it follows that a bound on $\int_M \mathcal{F}(-u)d\mu$ gives a bound on $\| u\|_{L^{\infty}(M)}$.  In the special case $\mathcal{F}(t)=t^p$ for $p>0$,  an $L^p$ bound of $u$ gives an $L^{\infty}$ bound of $u$.

In each of the two applications below, we will show  that the key estimate (\ref{keyestimate}) holds.

\section{The Kodaira-Thurston manifold with $S^1$ symmetry} \label{sectionKT}

We give the proof of Theorem \ref{theorem1}.  Let $M$ be the Kodaira-Thurston manifold, as described in the introduction.
We follow the notation in Buzano-Fino-Vezzoni \cite{BFV}, defining
$$e^1=dy, \quad e^2=dx, \quad e^3=dt, \quad e^4=dz-xdy,$$
where $t \in [0,2\pi)$ is the variable in the $S^1$ factor.  Consider the almost complex structures $J_{(1)}$ and $J_{(2)}$, defined by
$$J_{(1)}e^1=e^3,\quad J_{(1)}e^4=e^2,$$
$$J_{(2)}e^1=e^4,\quad J_{(2)}e^2=e^3.$$
These give rise to an $S^1$ family $\{ J_{\theta}\}_{\theta \in [0,2\pi)}$ of almost complex structures given by
$$J_{\theta} = \cos \theta \, J_{(1)} + \sin \theta \, J_{(2)}.$$
Each $J_{\theta}$ is compatible with the symplectic form
$$\omega_\theta=(\cos\theta\,e^1+\sin\theta\,e^2)\wedge e^3+e^4\wedge(-\sin\theta\,e^1+\cos\theta\,e^2),$$
and the resulting almost-K\"ahler metric is independent of $\theta$ and equal to
$$g=(e^1)^2+(e^2)^2+(e^3)^2+(e^4)^2.$$
$M$ does not admit a K\"ahler structure, and so none of the $J_{\theta}$ are integrable.  Note that for all $\theta$, the data $(M, \omega_\theta, J_\theta)$  are invariant under the $S^1$ action on $M$ given by translation in the $z$ coordinate.

\begin{remark}
From our earlier work \cite{TW}  one can solve the Calabi-Yau equation (\ref{CYeqn}) on $(M, \omega_{\theta}, J_{\theta})$ for any function $F$ which is invariant under the $T^2$ action which translates $z$ and $t$ (in \cite{TW} we considered the case $\theta=\frac{\pi}{2}$, but the same argument applies to all values of $\theta$).
\end{remark}

\begin{remark}  The manifold $M$ also admits the integrable complex structure $J_{(3)}=J_{(1)}J_{(2)}$ given by
$$J_{(3)}e^1=e^2,\quad J_{(3)}e^3=e^4,$$
which makes $M$ into a primary Kodaira surface. These three almost complex structure satisfy the quaternionic relations, and all invariant almost complex structures on $M$ which are compatible with $g$ and its orientation are of the form
$aJ_{(1)}+bJ_{(2)}+cJ_{(3)}$, with $a,b,c\in\mathbb{R}, a^2+b^2+c^2=1$; in this paper we are choosing $c=0$.
\end{remark}

The vector fields
$$X=\cos\theta\,\de_x-\sin\theta(\de_y+x\de_z), \ Y=\sin\theta\,\de_x+\cos\theta(\de_y+x\de_z),\ \de_t, \ \de_z,$$
are the dual frame of the coframe
$$ -\sin\theta\,e^1+\cos\theta\, e^2, \  \cos\theta\,e^1+\sin\theta\,e^2,\  e^3, \ e^4.$$
When $X$ and $Y$ act on $S^1$-invariant functions they are simply equal to
$$X=\cos\theta\,\de_x-\sin\theta\,\de_y, \quad Y=\sin\theta\,\de_x+\cos\theta\,\de_y,$$
and now these commute with each other, and with $\de_t$.
We will use $\partial_X$ and $\partial_Y$ to denote derivatives with respect to these vector fields.

We recall from Buzano-Fino-Vezzoni \cite[(61)]{BFV} that the Calabi-Yau equation on the Kodaira-Thurston manifold with $S^1$ symmetry can be reduced to the following elliptic PDE for a function $u$ on the three-torus $T^3$ (viewed as a quotient $\mathbb{R}^3/\mathbb{Z}^3$ with coordinates $(x,y,t)$):
\begin{equation} \label{PDE}
(u_{XX}+1)(u_{YY}+u_{tt}+u_t+1) - u_{XY}^2 - u_{Xt}^2 = e^F,
\end{equation}
with \cite[Proposition 1]{BFV}
\begin{equation} \label{elliptic}
u_{XX}+1>0, \quad u_{YY}+u_{tt}+u_t+1>0.
\end{equation}
We make the normalization $\sup_M u=0$.
Following  \cite{BFV}, it is enough to prove a uniform $L^\infty$ estimate for $u$.  Indeed, the key second order estimate $\sup_M |\Delta u| \le C(1+ \sup_M |\nabla u|)$ \cite[Theorem 6]{BFV} requires only the $L^{\infty}$ bound for $u$, and the remaining $C^{\infty}$ estimates are a consequence of this.  Note that although this second order estimate is proved by Buzano-Fino-Vezzoni under the assumption $\theta=0$, the proof for general $\theta \in [0,2\pi)$ is formally the same, as pointed out in Section 5 of  \cite{BFV}.

First note that adding the inequalities (\ref{elliptic}) gives that $\Delta u + u_t > -2$, for $\Delta$ the Laplace operator of the flat metric on the torus.  Then a Green's function argument as in \cite[Proposition 2.3]{CTW} gives a uniform $L^1$ bound on $u$ (or we can also use the weak Harnack inequality as in \cite[(43)]{Sz} to obtain an $L^p$ bound for some $p>0$).

We now complete the argument for the $L^{\infty}$ bound of $u$ following the ABP method, as outlined in Section \ref{sectionABP}.
We translate the coordinates so that $\inf_M u$ is achieved at the origin and we work in a ball $B=B_r$ of a fixed radius $r=1/4$ say, centered at $0$.
Define
$$v = u + \frac{\ve}{r^2} (x^2 + y^2 + t^2)$$
for a small $\ve>0$.
We use the terminology of Section \ref{sectionABP}.  It is sufficient to establish (\ref{keyestimate}), namely that at every point $x \in P$ we have
$$\det D^2 v(x) \le C.$$
By definition of $P$ we may assume $D^2 v \ge 0$ and $|Dv|\le \ve/2$ at $x$.
We can rewrite the PDE (\ref{PDE}) as
\begin{equation} \label{PDEv}
(v_{XX} + 1- O(\ve) ) (v_{YY} + v_{tt} + 1 - O(\ve)) - v_{XY}^2 - v_{Xt}^2 = e^F,
\end{equation}
noting that from $D^2v\ge 0$ we have $v_{XX} , v_{YY}, v_{tt} \ge 0$.
Hence, as long as $\ve$ is sufficiently small,
$$\frac{1}{2} \left(v_{XX} + v_{YY} + v_{tt} + \frac{1}{2}\right) + v_{XX} v_{YY} + v_{XX} v_{tt} \le e^F + v_{XY}^2 + v_{Xt}^2.$$
But since $D^2v\ge 0$ we have $v_{XY}^2 \le v_{XX} v_{YY}$ and $v_{Xt}^2 \le v_{XX} v_{tt}$ and it follows that
$$v_{YY} + v_{tt} +  v_{XX} +\frac{1}{2} \le 2e^F.$$
Hence $\textrm{tr}(D^2v) \le C$ and by the arithmetic-geometric mean inequality we obtain the required bound $\det D^2 v\le C$.  Since we have already established the $L^1$ bound of $u$, this completes the proof of the $L^{\infty}$ estimate and Theorem \ref{theorem1}.

\section{The Calabi-Yau equation on symplectic 4-manifolds} \label{sectionCYs}

We now give the proof of Theorem \ref{theorem2}, so we assume we are in that setting with $\tilde{\omega}$ solving the Calabi-Yau equation (\ref{CYeqn}) and $\vp$ the almost-K\"ahler potential given by (\ref{akp}).  Recall from \cite{W, TWY} that to find $\varphi$ given $\omega, \tilde{\omega}$,  solve the Poisson equation $\tilde{\Delta} \varphi : = \tr{\tilde{\omega}}(\frac{1}{2} dJd\varphi) = 2 - \tr{\tilde{\omega}}{\omega}$ subject to $\sup_M \varphi=0$, and then define the $1$ form $a$ by $da = \tilde{\omega} - \omega - \frac{1}{2} dJd\varphi$.  The function $\varphi$ is unique, while $a$ is unique up to ``gauge''.  Here $\tr{\tilde{\omega}}{\omega}$ is defined to be  $2(\tilde{\omega} \wedge \omega )/ \tilde{\omega}^2$.

We begin with the proof of (\ref{bd0}).  As noted in the introduction, this argument is due to Sz\'ekelyhidi. Following again the method of Section \ref{sectionABP}, we work in a coordinate chart, identified with the unit ball $B$,
in  which $\varphi$ achieves its $\inf_M\varphi$ at the origin.  Define $v= \varphi + \ve |x|^2$.  We need to prove (\ref{keyestimate}).  Let $x$ be a point
 where $D^2 v \ge 0$ and $|Dv|\le \ve/2$.  It suffices to show that $\det D^2 v$ is bounded from above at this point.

 Note that, writing $(D^2v)^J$ for the $J$-invariant part of $D^2v$, given by
\begin{equation} \label{D2vJ}
(D^2v)^J = \frac{1}{2} (D^2v + J^T (D^2v) J),
\end{equation}
we have that, at $x$,
\begin{equation} \label{dJdv11}
(dJd\varphi)^{(1,1)} = (D^2v)^J + O(\ve) \ge O(\ve),
\end{equation}
e.g. thanks to \cite[p.443]{TWWY}.  Note that here we are using the condition $|Dv| \le \ve/2$. It follows that
$$\omega^{(1,1)} + \frac{1}{2}(dJd\varphi)^{(1,1)} \ge \frac{ \omega^{(1,1)}}{2},$$
provided we choose $\ve$ sufficiently small (but uniform).
If we wedge this with $\tilde{\omega}$ we get
\[\begin{split}
\ti{\omega}^2&=\ti{\omega}\wedge\left(\omega^{(1,1)}+\frac{1}{2}(dJd \varphi)^{(1,1)}\right)\geq\frac{ \ti{\omega}\wedge\omega}{2},
\end{split}\]
since $\ti{\omega}\wedge da=0$ and $\ti{\omega}$ is of type $(1,1)$, and so
$$\tr{\ti{\omega}}{\omega}\leq 4.$$
By the Calabi-Yau equation (\ref{CYeqn}), this implies that $\omega^{(1,1)}$ and $\tilde{\omega}$ are uniformly equivalent.
But from (\ref{dJdv11}) and
\begin{equation} \label{tiDvp}
\ti{\Delta}\vp= \frac{1}{2}\tr{\tilde{\omega}}{ (dJd\varphi)^{(1,1)}}=  2-\tr{\ti{\omega}}{\omega}\leq 2,
\end{equation}
 we see  that $(dJd\varphi)^{(1,1)}$ is uniformly bounded.  We can then argue exactly  as in \cite{CTW} to obtain an upper bound on $\det D^2v$ (see also \cite{B, Sz}).  Indeed, using (\ref{D2vJ}), (\ref{dJdv11}) and  the inequality $\det (A+B) \ge \det A + \det B$ for nonnegative matrices $A,B$, we have
$$\det D^2v \le 8 \det ((D^2v)^J) \le C,$$
as required.
 This completes the proof of (\ref{bd0}).

Furthermore, recalling the estimate (\ref{measurelowerbound}) of
Section \ref{sectionABP}, we have
\begin{equation} \label{measure}
|\{ \varphi \le \inf_M \varphi +1 \}| \ge \delta,
\end{equation}
for a uniform $\delta>0$, where $| \cdot |$ is the measure of the set with respect to the volume form $\omega^2$.

\begin{remark} In fact, a simple modification of these arguments shows that (\ref{bd0}) holds for the ``almost-K\"ahler potentials'' $\vp_t$ for $\frac{1}{2}<t\leq 1$ (as defined in \cite{W}), when $\omega$ is also assumed to be compatible with $J$. On the other hand $\vp_{\frac{1}{2}}$ is uniformly bounded, as observed by Donaldson using Moser iteration (see \cite[Remark 6.1]{W}).
\end{remark}

To finish the proof of Theorem \ref{theorem2}, it suffices to prove the following:
\begin{proposition}\label{main3}
Given any $0<p<2/3$, the function $\vp$ satisfies the estimate
\begin{equation}\label{bd3}
\|\vp-\inf_M\vp\|_{L^p(M)}\leq C_p,
\end{equation}
for a uniform constant $C_p$ which depends only on the background data and on $p$.
\end{proposition}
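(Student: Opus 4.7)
Normalize $L := \|\varphi\|_{L^\infty(M)} = -\inf_M \varphi$ and $\psi := \varphi + L$, so $\psi \ge 0$ and $\sup_M \psi = L$; the claim reduces to bounding $\int_M \psi^p\,\omega^2$ uniformly in the data for $p \in (0, 2/3)$. My plan is to use (\ref{bd0}) with a one-parameter family of test functions $\mathcal{F}$ to produce distributional constraints on $\psi$, then combine these with a layer-cake decomposition and the measure lower bound (\ref{measure}) to obtain the $L^p$ estimate.

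For each $b \in (1, L]$ and $q > 0$, I would apply (\ref{bd0}) to $\mathcal{F}(x) = (x - (L-b))_+^q$, suitably regularized so that it is positive, strictly increasing, and tends to $+\infty$, then send the regularization to zero. This yields the family of moment inequalities
\[
(b-1)^q \le C \int_M (b - \psi)_+^q\, \omega^2
\]
with $C$ independent of $b$ and $q$. Rewriting via layer-cake in terms of the distribution function $f(t) := |\{\psi \le t\}|$, these read
\[
(b-1)^q \le C q \int_0^b (b-t)^{q-1} f(t)\, dt,
\]
equivalently a family of weighted upper bounds on the complementary distribution $g(t) := |\{\psi > t\}|$.

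The next step is to extract from this family a polynomial tail decay $g(t) \le C\, t^{-\alpha}$, valid for all $t \in [1, L]$ with some $\alpha > p$ that can be taken arbitrarily close to $2/3$. Once this is in hand, the layer-cake identity $\int_M \psi^p \omega^2 = p \int_0^L t^{p-1} g(t)\,dt$, together with the trivial bound $g \le |M|$ for $t \le 1$, gives the uniform-in-$L$ estimate
\[
\int_M \psi^p \omega^2 \le p|M| + Cp \int_1^L t^{p-1-\alpha}\, dt
\]
whenever $p < \alpha$, and taking $\alpha$ close to $2/3$ covers the full range. The hard part will be this decay extraction: (\ref{bd0}) naturally constrains moments of $L - \psi$ (the ABP argument is centered at $\inf\varphi$), whereas we need tail decay of $\psi$ near its supremum; converting the moment constraints into the required polynomial decay for $g(t)$ demands a delicate joint optimization of $b$ and $q$, and the exponent $2/3$ reflects the ABP scaling factor $\ve^n$ in Proposition \ref{gabor} in real dimension $n=4$, appearing to be sharp for this method.
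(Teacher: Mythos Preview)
Your approach has a genuine gap: the family of moment inequalities you extract from \eqref{bd0} contains no more information than the single measure bound \eqref{measure}, and hence cannot yield any decay of $g(t)$ at all. Concretely, from $|\{\psi\le 1\}|\ge\delta$ alone one already has, for every $b>1$ and $q>0$,
\[
\int_M (b-\psi)_+^q\,\omega^2 \;\ge\; \int_{\{\psi\le 1\}} (b-\psi)_+^q\,\omega^2 \;\ge\; \delta\,(b-1)^q,
\]
so your inequalities $(b-1)^q \le C\int_M (b-\psi)_+^q\,\omega^2$ hold with $C=1/\delta$ as a trivial consequence of \eqref{measure}. Conversely, \eqref{bd0} is proved in Section~\ref{sectionABP} precisely by restricting the integral to the contact set $P\subset\{\psi\le 1\}$, so the two statements are equivalent. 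A function $\psi$ equal to $0$ on a set of measure $\delta$ and equal to $L$ on the complement satisfies all of your constraints, yet has $g(t)=|M|-\delta$ for every $t\in[0,L)$: no decay whatsoever. The ``delicate joint optimization of $b$ and $q$'' you allude to therefore cannot exist.

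The paper's argument supplies the missing analytic input: it uses the equation through $\tilde\Delta\varphi\le 2$, integration by parts against the truncations $\psi_s=\max(s-\psi,0)$, Cauchy--Schwarz together with the $L^1$ bound on $\tr{\omega}{\tilde\omega}$, and the $W^{1,1}\hookrightarrow L^{4/3}$ Sobolev--Poincar\'e inequality in real dimension $4$. These combine to give $\gamma(s)^{3/4}\,\underline{\psi_s}\le C s^{1/2}$; only then does \eqref{measure} enter, to bound $\underline{\psi_s}\ge c\,s$ from below. The exponent $2/3$ thus arises from the Sobolev exponent $4/3=n/(n-1)$ and the $s^{1/2}$ coming from Cauchy--Schwarz, not from the ABP scaling $\ve^n$ as you suggest.
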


Indeed, suppose that Proposition \ref{main3} holds, and let $\lambda>0$. We may assume without loss of generality that $\lambda <-\inf_M \varphi$. Then  for any $0<p<2/3$,
$$(- \lambda - \inf_M \varphi)^p | \{ \varphi \ge - \lambda \} | \le \int_{ \{ \varphi \ge -\lambda\} } (\varphi - \inf_M \varphi)^p \omega^2 \le C_p,$$
and then (\ref{mi}) holds with $q=1/p>3/2$ and $C_q = C_p^{1/p}$.

\begin{proof}[Proof of Proposition \ref{main3}]
Using the elementary formula
$$\int_M u^p \omega^2 =p\int_0^\infty |\{u\geq s\}|s^{p-1}ds,$$
applied to $u=\vp-\inf_M\vp$,
it is enough to show that
\begin{equation}\label{good}
|\{\varphi\geq \inf_M\varphi +s\}|\leq C s^{-\frac{2}{3}}.
\end{equation}
for a uniform constant $C>0$ and for all $s\geq 2.$

Let $\psi_s=\max(-\varphi+\inf_M\varphi +s,0)$. Then $\psi_s$ is a Lipschitz function that satisfies $0\leq \psi_s\leq s$. It is a well-known fact that on the set $\{\psi_s=0\}$ the gradient of $\psi_s$ is zero
a.e. (see e.g. \cite[Theorem 3.2.6]{AT}). We have an obvious pointwise bound
$$|\de \psi_s |^2_{\omega}\leq \tr{\omega}{\ti{\omega}}|\de\psi_s|^2_{\ti{\omega}},$$
where we define
$$|\de \psi_s |^2_{\omega} = \frac{\omega \wedge d\psi_s \wedge Jd \psi_s}{\omega^2}.$$
We also have the  $L^1$ bound $\int_M (\tr{\omega}{\ti{\omega}})  \omega^2\leq C$.  Then, using in addition the Calabi-Yau equation (\ref{CYeqn}),
\begin{equation}\label{a}
\begin{split}
\int_M|\de\psi_s|_{\omega}\omega^2 \le {} & \left(\int_M (\tr{\omega}{\ti{\omega}})\omega^2\right)^{\frac{1}{2}}\left(\int_M |\de\psi_s|^2_{\ti{\omega}} \omega^2\right)^{\frac{1}{2}} \\
\leq {} & C\left(\int_M|\de\psi_s|^2_{\ti{\omega}} \ti{\omega}^2\right)^{\frac{1}{2}} \\
= {} & C\left(\int_{\{\psi_s>0\}}|\de\psi_s|^2_{\ti{\omega}} \ti{\omega}^2\right)^{\frac{1}{2}}.
\end{split}
\end{equation}
Since $\psi_s$ is smooth on the open set $\{\psi_s>0\}$ and vanishes on its boundary, we can integrate by parts (this is justified by exhausting this set by smooth domains) and get, using (\ref{tiDvp}) and again \eqref{CYeqn},
$$\int_M|\de\psi_s|_{\omega}\omega^2\leq C\left(\int_{\{\psi_s>0\}}\psi_s\ti{\Delta}(-\psi_s)\ti{\omega}^2\right)^{1/2}\leq C
\left(\int_{\{\psi_s>0\}}\psi_s \omega^2\right)^{1/2}\leq Cs^{\frac{1}{2}}. $$
Now the Sobolev-Poincar\'e inequality gives
\begin{equation} \label{SP}
\|\psi_s-\underline{\psi_s}\|_{L^{\frac{4}{3}}(M)}\leq C\|\de\psi_s\|_{L^1(M)}\leq Cs^{\frac{1}{2}},
\end{equation}
where we define $\underline{\psi_s}=\frac{\int_M\psi_s\omega^2}{\int_M\omega^2}$.
Let $$\gamma(s)=|\{\psi_s=0\}|=|\{\varphi\geq\inf_M\varphi+s\}|,$$
so that
\begin{equation}\label{b}
\|\psi_s-\underline{\psi_s}\|_{L^{\frac{4}{3}}(M)}\geq \left(\int_{\{\psi_s=0\}}
|\psi_s-\underline{\psi_s}|^\frac{4}{3}\omega^2\right)^{\frac{3}{4}}=\gamma(s)^{\frac{3}{4}}
\underline{\psi_s}.
\end{equation}
Then from \eqref{SP} and \eqref{b} we get
\begin{equation}\label{stima}
\gamma(s)^{\frac{3}{4}} \underline{\psi_s}\leq Cs^{\frac{1}{2}}.
\end{equation}
Now we look at the set $\{\varphi\leq\inf_M\varphi+1\}$
which by \eqref{measure} has volume at least $\delta$. On this set we have
that $\psi_s\geq s-1 \geq s/2,$ as long as $s\geq 2$. Then we have
$$\underline{\psi_s} \int_M \omega^2= \int_M \psi_s\omega^2\geq\int_{\{\varphi\leq\inf_M\varphi+1\}}\psi_s\omega^2\geq \frac{\delta s}{2}.$$
Combining this with \eqref{stima} we finally get
$$\gamma(s)\leq C s^{-\frac{2}{3}},$$
completing the proof of the proposition.
\end{proof}

\end{document}